\crefname{equation}{}{}
\crefname{figure}{{\sc Figure}}{{\sc Figure}}
\crefname{subsection}{Subsection}{Subsections}
\newtheorem{theorem}{Theorem}[section]
\newtheorem{proposition}[theorem]{Proposition}
\newtheorem{lemma}[theorem]{Lemma}
\newtheorem{corollary}[theorem]{Corollary}
\newtheorem{claim}[theorem]{Claim}
\newtheorem*{claim*}{Claim}
\theoremstyle{definition}
\newtheorem{problem}[theorem]{Problem}
\newcommand{\F}{{\mathbb F}}
\newenvironment{poc}{\begin{proof}[Proof of claim]}{\end{proof}}
\numberwithin{equation}{section} 
\numberwithin{figure}{section}
\numberwithin{table}{section}
\title{Paley-type matrices and $1$-factorizations of complete graphs}
\author{Chi Hoi Yip}
\address{Department of Mathematics, Hong Kong University of Science and Technology, Clear Water Bay, Hong Kong}
\email{machyip@ust.hk}
\author{Semin Yoo}
\address{Discrete Mathematics Group \\ Institute for Basic Science \\ 55 Expo-ro Yuseong-gu, Daejeon 34126 \\ South Korea}
\email{syoo19@ibs.re.kr}
\author{Shikang Yu}
\address{School of Mathematics and Statistics, Beijing Jiaotong University, Beijing, 100044, PR China}
\email{healthyu@bjtu.edu.cn}
\subjclass[2020]{05C70, 11B30, 11T24}
\keywords{$1$-factorization, Paley matrix, complete graph, quadratic residue}
\begin{document}

\begin{abstract}
Ball, Ortega--Moreno, and Prodromou asked two questions about whether, for every odd prime $p$, one can find a $1$-factor of the complete graph $K_{p+1}$ with some arithmetic restrictions related to quadratic residues. These problems are motivated by two natural compatibility conditions between $1$-factorizations and the sign patterns of certain Paley-type matrices. Recently, Afifurrahman et al. made some partial progress on the second problem. In this paper, we completely resolve both problems. We prove that the first problem has a solution precisely when $p\equiv3\pmod4$, while the second problem has a solution for every
odd prime $p$.
\end{abstract}

\maketitle

\section{Introduction}

Throughout the paper, let $p$ be an odd prime. Let $\mathbb{F}_p$ be the finite field with $p$ elements, and $\F_p^*=\F_p \setminus \{0\}$.

Let $n$ be a positive integer and $K_n$ denote the complete graph on $n$ vertices.
A \emph{$1$-factor} of $K_{n}$ (of even order) is a set of independent edges spanning the vertices of $K_n$, and a \emph{$1$-factorization} of $K_n$ is a partition of the set of edges of $K_n$ into $1$-factors. 
It is a classical result that $K_n$ admits a $1$-factorization whenever $n$ is even; it is therefore natural to ask for $1$-factorizations satisfying additional combinatorial constraints, see, for example, the survey \cite{MR85} and the monograph \cite{W97}. 

A natural source of additional constraints on a $1$-factorization is obtained by prescribing, for each factor, a balanced bipartition of the vertex set and requiring every edge of the factor either to lie across it or to have both endpoints in one of its parts. Hadamard matrices provide a particularly symmetric source of such bipartitions; see \cite[Chapter~2]{H07} for background on Hadamard matrices and \cite{CD06} for related design-theoretic material. Indeed, after normalization, the first row is constant, and orthogonality to this row forces each non-first row to have equally many $1$ and $-1$ entries. Thus each non-first row determines a balanced bipartition of the columns according to the signs of its entries, while mutual row orthogonality imposes strong pairwise uniformity among these bipartitions. This correspondence is also numerically well suited to $1$-factorizations: a $1$-factorization of $K_n$ consists of exactly $n-1$ factors, while a normalized Hadamard matrix of order $n$ has exactly $n-1$ non-first rows. Thus Hadamard matrices naturally provide balanced sign constraints for $1$-factorizations.

In 2019, Ball, Ortega--Moreno, and Prodromou \cite{BOP19} initiated the study of $1$-factorizations that are compatible with the sign patterns of normalized Hadamard matrices.
In particular, they \cite[Section~2]{BOP19} proposed an arithmetic viewpoint that connects such factorizations with the celebrated Paley construction of Hadamard matrices \cite{P33}. 

A \emph{Hadamard matrix} of order $n$ is a square matrix $H$ whose entries are either $1$ or $-1$ and whose rows are orthogonal, i.e., $HH^T=nI$.
Let $p$ be an odd prime. Consider the matrix
\[
L_p=\Big(\Big(\frac{j-i}{p}\Big)\Big)_{i,j\in\mathbb{F}_p},
\]
where $\big(\frac{\cdot}{p}\big)$ denotes the Legendre symbol.
Furthermore, we define the matrix $H_p$ by
\[
H_p=\begin{bmatrix}
0 & \mathbf{1} \\
-\mathbf{1}^T & L_p
\end{bmatrix}+I,
\]
where $\mathbf{1}=(1,1,\dots,1)\in\mathbb{R}^p$ is a row vector and $I$ is the identity matrix of order $p+1$.
When $p\equiv 3\pmod 4$, the matrix $H_p$ is known as a \emph{Paley matrix}, and Paley \cite{P33} proved that $H_p$ is a Hadamard matrix. Paley-type matrices form one of the standard infinite families in the theory of Hadamard matrices. In the present setting they are particularly natural: their sign patterns are encoded by the quadratic character on $\F_p$, so the two compatibility conditions for $1$-factorizations become explicit arithmetic problems involving quadratic residues and quadratic non-residues.

Following \cite{BOP19}, let $K_{p+1}$ be the complete graph with vertex set $\F_p \cup \{c\}$, where $c$ is an additional point, called the \emph{centre}. We use $\F_p \cup \{c\}$ to label the indices of the columns of $H_p$, where $c$ corresponds to the first column, and assume that the indices of the rows of $H_p$ start from $0$ and end with $p$.

Ball, Ortega--Moreno, and Prodromou \cite{BOP19} studied two ways for $1$-factorizations $\{F_1,\dots,F_p\}$ of $K_{p+1}$ to be compatible with the sign pattern of $H_p$: if an edge $e=\{i,j\}$ belongs to the factor $F_k$, then the entries in row $k$ of $H_p$ at columns $i$ and $j$ are required either to have opposite signs, denoted by \textup{(R1)}, meaning $(H_p)_{ki}(H_p)_{kj}<0$, or to have the same sign, denoted by \textup{(R2)}, meaning $(H_p)_{ki}(H_p)_{kj}>0$. To elaborate on their idea, we introduce some terminology following \cite{BOP19}. Given two vertices $x$ and $y$ in $\F_p$, the \textit{length} of an undirected edge that connects $x$ and $y$, $\ell(\{x,y\})$, is the set of the two possible differences of the labels of the incident vertices. In other words, 
\[
\ell(\{x,y\})=\{x-y, y-x\}\subset \F_p^*.
\]
For convenience, we denote $\ell(\{x,y\})$ by $\ell(x,y)$. 
For edges incident to the centre $c$, we set
$\ell(c,x) := \infty$ for any $x\in \F_p$ so that an edge incident to $c$ can never share its length with an edge inside $\F_p$.

A key observation in \cite{BOP19} is that the rows of the matrix $L_p$ are cyclic permutations of the first row.
This suggests a cyclic method. Starting from a single $1$-factor $F_1$ on $\mathbb{F}_p\cup\{c\}$ corresponding to row $1$ of $H_p$, one hopes to obtain an
entire $1$-factorization by translating that $1$-factor through $\mathbb{F}_p$ while fixing $c$.
For such a translation to cover each edge exactly once, it is equivalent to the following condition: $F_1$ must contain, within $\F_p$,
exactly one edge $\{i,j\}$ of each possible length
\[
i-j=\pm1, \pm2, \dots, \pm\frac{p-1}{2},
\]
with edges incident to $c$ assigned $\infty$.
Thus, this viewpoint forces a purely combinatorial requirement that all selected edges in $F_1$ have pairwise distinct lengths. On the other hand, note that $(H_p)_{1,c}=-1, (H_p)_{1,0}=1$, and $(H_p)_{1,j}=\big(\frac{j}{p}\big)$ for each $j\in \F_p^*$. Thus, under restriction \textup{(R1)}, each edge in $F_1$ joins a quadratic residue and a quadratic non-residue, while under restriction \textup{(R2)}, each edge joins either two quadratic residues or two quadratic non-residues, where we conveniently view $0$ as a quadratic residue since $(H_p)_{1,0}=1$, and $c$ as a quadratic non-residue since $(H_p)_{1,c}=-1$. These observations led to the formulation of the following two problems.

\begin{problem}\label{prob:P1}[\cite{BOP19}, Problem 2.2 (P1)]
Let $p$ be an odd prime. View $0$ as a quadratic residue, and $c$ as a quadratic non-residue.
Is there a $1$-factor $F$ of $K_{p+1}$ with vertex set $\F_p \cup \{c\}$ such that
\begin{enumerate}
\item each edge in $F$ joins either two quadratic residues or two quadratic non-residues, and
\item the lengths of the edges in $F$ are pairwise distinct?
\end{enumerate}
\end{problem}

\begin{problem}\label{prob:main}[\cite{BOP19}, Problem 2.2 (P2)]
Let $p$ be an odd prime. View $0$ as a quadratic residue, and $c$ as a quadratic non-residue. 
Is there a $1$-factor $F$ of $K_{p+1}$ with vertex set $\F_p \cup \{c\}$ such that 
\begin{enumerate}
\item each edge in $F$ joins a quadratic residue and a quadratic non-residue, and
\item the lengths of the edges in $F$ are pairwise distinct?
\end{enumerate}
\end{problem}

The first problem has a simple parity obstruction, which turns out to be the only obstruction.

\begin{theorem}\label{thm:P1}
\cref{prob:P1} has an affirmative answer if and only if $p\equiv 3\pmod 4$.
\end{theorem}

For $p\equiv 7\pmod 8$, Ball, Ortega--Moreno, and Prodromou \cite{BOP19} had previously obtained an affirmative answer to \cref{prob:P1} under the additional assumption that $2$ is a ``near-primitive root" modulo $p$, that is, $2\equiv x^2\pmod p$ for some primitive root $x$ modulo $p$.\footnote{They also stated in \cite[Theorem~2.3]{BOP19} that \cref{prob:P1} has an affirmative answer for $p\equiv1\pmod4$. Their proposed construction pairs each nonzero element $r$ with $-r$; these pairs do have the required quadratic types and distinct lengths, but they leave $0$ and $c$ unmatched. The only possible remaining edge is $\{0,c\}$, which joins a quadratic residue to a quadratic non-residue. Thus the construction does not produce the required $1$-factor; equivalently, the two quadratic-type classes both have odd cardinality $(p+1)/2$.}

\medskip

Ball, Ortega--Moreno, and Prodromou \cite{BOP19} solved \cref{prob:main} when $p\equiv 3\pmod 4$. Their construction is quite simple: consider the $1$-factor given by pairing $d$ with $-d$ for each $d\in\{1,\ldots,(p-1)/2\}$ and pairing $0$ with the centre $c$. Indeed, since $-1$ is a quadratic non-residue in this case, each edge $\{d,-d\}$ joins a quadratic residue and a quadratic non-residue, and the edge lengths are clearly pairwise distinct.
In contrast to the relatively direct constructions available when $p\equiv 3\pmod 4$, the case $p\equiv 1\pmod 4$ appears substantially more delicate. Using a more involved construction, Ball, Ortega--Moreno, and Prodromou \cite{BOP19} obtained a solution to \cref{prob:main} for primes $p\equiv 5\pmod 8$ under the additional assumption that $2$ is a primitive root modulo $p$.\footnote{It is not known unconditionally whether either of these additional hypotheses holds for infinitely many primes $p$: that $2$ is a primitive root modulo $p$, or that $2$ is a ``near-primitive root" modulo $p$. Both are instances of Artin-type conjectures.}
More recently, Afifurrahman, Primaskun, Etriana Putri, and Wijaya \cite{A25} resolved \cref{prob:main} for all primes $p \equiv 5 \pmod 8$ via an explicit construction, successfully removing the additional assumption on $2$ being a primitive root. However, the remaining case $p\equiv 1 \pmod 8$ was still open, and it appears that the proof techniques used in \cite{A25, BOP19} do not extend to this open case.

In this paper, we resolve \cref{prob:main} for all primes $p\equiv 1 \pmod 8$ by giving an explicit construction, thereby completely resolving \cref{prob:main} for all odd primes $p$. 

\begin{theorem}\label{thm:main}
\cref{prob:main} has an affirmative answer for every odd prime $p$.
\end{theorem}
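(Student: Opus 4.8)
Since the cases $p\equiv 3\pmod 4$ and $p\equiv 5\pmod 8$ are already settled in \cite{BOP19,A25}, it suffices to treat $p\equiv 1\pmod 8$, and I will use the combinatorial reformulation isolated in the introduction. Identifying the centre $c$ with $\infty$, a $1$-factor as in \cref{prob:main} is exactly a fixed-point-free involution on $\F_p\cup\{\infty\}$ that (i) carries the residue side $\{0\}\cup(\F_p^*)^2$ onto the non-residue side $(\F_p^*\setminus(\F_p^*)^2)\cup\{\infty\}$ and back, and (ii) realizes each length in $\{1,\dots,(p-1)/2\}$ exactly once (the unique edge at $c$ carrying the length $\infty$). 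Matching $c=\infty$ with $0$ disposes of that special edge, since $0$ is a residue and $\infty$ a non-residue; the task then reduces to producing what I will call a \emph{balanced starter} on $\F_p^*$: a partition of $\F_p^*$ into $(p-1)/2$ pairs, one of each length, with every pair joining a residue to a non-residue. The whole difficulty of the case $p\equiv 1\pmod 4$ is that $-1$ is now a square, so the patterned starter $\{d,-d\}$ joins two elements of the same character and is no longer balanced.

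The engine of the construction will be the remark that both balance and distinct lengths can be made automatic, leaving a single covering condition. Fix a base pair $\{s,s-1\}$ of difference $1$ that is balanced, i.e.\ $\chi(s(s-1))=-1$, where $\chi$ denotes the quadratic character (equivalently, the ratio $(s-1)/s$ is a non-residue). For any scalar $m\in\F_p^*$ the dilate $\{ms,m(s-1)\}$ is again balanced, because multiplying by $m$ scales both characters by $\chi(m)$ and hence preserves their ratio, and it has length exactly $m$. Consequently, if I partition the length set $\{1,\dots,(p-1)/2\}$ into blocks $S_1,\dots,S_r$ and attach to each block a balanced anchor $s_i$, then the edges $\{ms_i,m(s_i-1)\}$ for $m\in S_i$ are automatically balanced and automatically cover each length once. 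Everything then comes down to the lone requirement that these edges be pairwise vertex-disjoint, that is,
\[
\bigsqcup_{i=1}^{r}\big(s_iS_i\ \sqcup\ (s_i-1)S_i\big)=\F_p^*,
\]
a disjointness-and-covering (tiling) condition on dilated blocks, after which the leftover vertex $0$ is matched to $c$ to finish the $1$-factor.

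The plan is therefore to exhibit, for every $p\equiv 1\pmod 8$, an explicit choice of blocks $S_i$ together with balanced anchors $s_i$ making the displayed tiling hold, and then to check mechanically that each $s_i$ satisfies $\chi(s_i(s_i-1))=-1$. The main obstacle — and the reason $p\equiv 1\pmod 8$ is genuinely harder than the earlier cases — is that this tiling resists every single-orbit or sign-symmetric shortcut. Because $-1$ is a square, any multiplicatively structured candidate for a ``positive half'' of $\F_p^*$ is forced to be stable under $x\mapsto -x$; but such stability both collapses lengths (the dilates by $m$ and $-m$ have equal length) and wrecks the partition, which is exactly how the one-block choice $s=1/2$ degenerates to the unbalanced patterned starter. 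In particular, a naive two-block attempt with $s_2=1-s_1$ reduces the tiling to finding a $\pm$-transversal $T$ with $T\sqcup\rho T=\F_p^*$ for a non-residue $\rho$, and this is impossible since it forces $T$ to be a union of cosets of $\langle-\rho\rangle$, a subgroup that necessarily contains $-1$.

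Overcoming this requires a genuine decomposition into several asymmetric blocks whose dilates interleave to cover $\F_p^*$ without overlap. I expect the correct anchors to be manufactured from small residues that are available precisely when $p\equiv 1\pmod 8$ — where, in contrast to the case $p\equiv 5\pmod 8$ treated in \cite{A25}, the element $2$ is a square — so that the relevant dilation factors and block endpoints can be arranged along controlled arithmetic progressions. The bulk of the work, and the step I expect to be the real crux, will be the combinatorial verification that the chosen dilated blocks tile $\F_p^*$ exactly; once that is in hand, balance and distinctness of lengths are immediate from the construction above, and \cref{thm:main} follows by adjoining the edge $\{0,c\}$.
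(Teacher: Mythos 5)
There is a genuine gap: your proposal is a framework, not a proof. The reduction to a ``balanced starter'' on $\F_p^*$, the observation that dilating a balanced base pair $\{s,s-1\}$ by $m$ produces a balanced pair of length exactly $\pm m$, and the resulting reformulation as a tiling condition
\[
\bigsqcup_{i=1}^{r}\big(s_iS_i\ \sqcup\ (s_i-1)S_i\big)=\F_p^*
\]
are all correct and nicely isolate where the difficulty lives. But you never exhibit the blocks $S_i$, the anchors $s_i$, or a verification of the tiling; you explicitly defer this as ``the real crux'' and offer only a heuristic (that $2$ is a square when $p\equiv 1\pmod 8$) for why suitable choices should exist. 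Since the entire difficulty of the case $p\equiv 1\pmod 8$ is concentrated in exactly that step, the theorem is not proved.

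It is worth seeing how the paper fills this hole, because its construction fits your framework and shows what the missing work actually involves. The paper's edges are dilates $a^i\cdot\{1,a^U\}$ of three base pairs (with $U\in\{M+1,M-3,M-1\}$, $M=(p-1)/2$, $a$ a primitive root), with the dilation exponents $i$ ranging over congruence classes modulo $4$; disjointness of the dilates (your tiling condition) is then a short computation with exponents mod $4$. The price is that lengths are no longer automatically distinct (the base differences are not $1$), and distinctness is enforced by an arithmetic hypothesis: the primitive root $a$ must satisfy $a(a^2-a+1)\in(\F_p^*)^4$. Establishing that such an $a$ exists is itself nontrivial --- the paper needs Weil's bound on character sums over primitive roots, explicit bounds on $\varphi$ and $\tau$, and a computer check for all $p\le 9.2\times 10^7$. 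Your approach trades the length-distinctness problem for the tiling problem, but you should expect the same phenomenon: any concrete choice of anchors will impose quadratic-character conditions (``$\chi(s_i(s_i-1))=-1$ for all $i$'') that cannot hold uniformly by a single formula for all $p\equiv 1\pmod 8$, so you would likewise need either a case analysis over character patterns or an analytic existence argument. Until the blocks, anchors, and tiling verification are supplied, the proposal does not establish \cref{thm:main}.
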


Together, \cref{thm:P1,thm:main} completely resolve Problem~2.2 in \cite{BOP19}.

Consequently, denote by $F$ such a $1$-factor of $K_{p+1}$ satisfying the conditions in \cref{prob:main}. By the above discussion,
translating $F$ through $\F_p$ (while fixing $c$) yields a $1$-factorization of $K_{p+1}$ satisfying restriction \textup{(R1)}. Similarly, when $p\equiv 3\pmod 4$, translating a solution to \cref{prob:P1} yields a $1$-factorization satisfying restriction \textup{(R2)}.

\begin{corollary}
Let $p$ be an odd prime. Then there exists a $1$-factorization of $K_{p+1}$ satisfying restriction \textup{(R1)} with respect to $H_p$. If $p\equiv 3\pmod 4$, then there also exists one satisfying restriction \textup{(R2)}.
\end{corollary}

We prove \cref{thm:P1} in \cref{sec:P1}. We then divide the proof of \cref{thm:main} into two parts. 
In \cref{sec2}, we give an explicit $1$-factor for $p \equiv 1 \pmod 8$
under a quartic condition on a primitive root; see \cref{prop:main} for the precise condition. 
In \cref{sec3}, we prove that such a primitive root exists for all sufficiently large $p$ using character sum estimates, and finally, we verify the remaining primes computationally.

\section{The same-sign problem \textup{(P1)}}\label{sec:P1}

\begin{proof}[Proof of \cref{thm:P1}]
Let
\[
Q:=\{x^2:x\in\F_p^*\},\qquad NQ:=\F_p^*\setminus Q,
\]
and set $\widehat Q:=Q\cup\{0\}$ and $\widehat{NQ}:=NQ\cup\{c\}$. Then
\[
|\widehat Q|=|\widehat{NQ}|=\frac{p+1}{2}.
\]

\emph{Necessity.}
Suppose that there exists a $1$-factor $F$ satisfying the conditions in \cref{prob:P1}. Since every edge of $F$ has both endpoints in $\widehat Q$ or both endpoints in $\widehat{NQ}$, the edges of $F$ partition $\widehat Q$ into pairs. Hence $(p+1)/2$ is even, so $p\equiv 3\pmod 4$.

\medskip

\emph{Sufficiency.}
Suppose that $p\equiv 3\pmod 4$. Then $-1$ is a quadratic non-residue. Moreover,
\[
\widehat Q=\left\{0^2,1^2,\ldots,\left(\frac{p-1}{2}\right)^2\right\}
\]
and
\[
\widehat{NQ}=\left\{-1^2,-2^2,\ldots,-\left(\frac{p-1}{2}\right)^2,c\right\}.
\]
Define
\begin{align*}
F:= {}&\left\{\{k^2,(k+1)^2\}:0\le k\le \frac{p-3}{2},\ k\equiv 0\pmod 2\right\}\\
&\cup\left\{\{-k^2,-(k+1)^2\}:0\le k\le \frac{p-3}{2},\ k\equiv 1\pmod 2\right\}\\
&\cup\left\{\left\{-\left(\frac{p-1}{2}\right)^2,c\right\}\right\}.
\end{align*}
It is immediate from the displayed descriptions of $\widehat Q$ and $\widehat{NQ}$ that $F$ is a $1$-factor satisfying the first condition in \cref{prob:P1}.

It remains to verify that its edge lengths are pairwise distinct. They are $\infty$ and
\[
\{\pm(2k+1)\},\qquad 0\le k\le \frac{p-3}{2}.
\]
Suppose that $0\le i<j\le (p-3)/2$ and
\[
\{\pm(2i+1)\}=\{\pm(2j+1)\}.
\]
Then either $2i+1\equiv 2j+1\pmod p$ or $2i+1\equiv -(2j+1)\pmod p$. In the first case, $p\mid j-i$; in the second, $p\mid i+j+1$. Both are impossible because
\[
0<j-i<p\qquad\text{and}\qquad 0<i+j+1<p.
\]
Thus the edge lengths are pairwise distinct, completing the proof.
\end{proof}

\section{A construction for the opposite-sign problem \textup{(P2)}}\label{sec2}
In this section, we construct a $1$-factor of $K_{p+1}$ when $p \equiv 1 \pmod 8$ to solve \cref{prob:main}. The section is devoted to the proof of the following proposition.

\begin{proposition}\label{prop:main}
Let $p\equiv 1\pmod 8$ be a prime. If there is a primitive root $a$ in $\F_p^*$ such that $a(a^2-a+1)\in(\F_p^*)^4=\{x^4: x \in \F_p^*\}$, then the answer to \cref{prob:main} is affirmative.
\end{proposition}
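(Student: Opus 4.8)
The plan is to exhibit an explicit $1$-factor, using the hypothesized primitive root $a$ to coordinatize $\F_p^*$ by discrete logarithms. The guiding principle is that $a$ is an odd power of itself and hence a quadratic non-residue, so multiplication by $a$ reverses the quadratic character and any edge of the form $\{x,ax\}$ automatically joins a residue and a non-residue; thus condition~(1) of \cref{prob:main} can be arranged by building the factor predominantly out of such character-reversing edges. The genuine difficulty, and the reason $p\equiv 1\pmod 8$ is the hard case, lies in condition~(2). Since $p\equiv 1\pmod 8$, the element $-1$ is not merely a square but a fourth power, so every coset of $(\F_p^*)^4$ --- in particular the set of residues and the set of non-residues --- is closed under negation. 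Consequently any pairing invariant under a multiplicative subgroup yields a difference set closed under $x\mapsto -x$, which double-covers half of the length classes $\{\pm 1\},\dots,\{\pm(p-1)/2\}$ and misses the other half. The construction must therefore be deliberately \emph{asymmetric}, breaking this multiplicative symmetry.

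Concretely, I would first match the centre $c$ with $0$: since $0$ counts as a residue and $c$ as a non-residue, this is a legal edge of length $\infty$, and it reduces the problem to partitioning $\F_p^*$ into $(p-1)/2$ residue--non-residue pairs whose differences realize each length class exactly once. I would then prescribe the remaining edges by explicit formulas in the powers of $a$: a \emph{bulk} family following a regular geometric rule (mostly edges $\{x,ax\}$, which handle condition~(1) for free), together with a bounded number of \emph{boundary} edges built from the small powers $1,a,a^2$ and from $0$, inserted precisely to break the $\pm$-symmetry and redirect the missed length classes onto the doubly-covered ones.

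The verification then splits into three checks. First, that the prescription is a genuine partition of $\F_p\cup\{c\}$, so that every vertex is covered exactly once; this is bookkeeping. Second, condition~(1): the bulk edges are character-reversing by design, while for the finitely many boundary edges one must compute the quadratic characters of the relevant endpoints and differences directly. This is exactly where the hypothesis $a(a^2-a+1)\in(\F_p^*)^4$ should enter: the product of the quantities whose characters must come out correctly at the seam is expected to factor through $a(a^2-a+1)$, and a fourth-power condition --- stronger than a bare quadratic-residue condition, and reflecting that $-1$ is itself a fourth power --- is precisely what simultaneously pins down the several linked character constraints there. Third, condition~(2): I would prove that the difference map from edges to length classes is a bijection, by checking that the bulk contributes an explicitly describable set of classes injectively and that the boundary edges fill in exactly the complementary classes, so that any putative collision reduces to an explicit identity among powers of $a$ (of the shape $a^i-a^j=\pm(a^k-a^l)$) that the construction precludes.

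I expect the main obstacle to be satisfying conditions~(1) and~(2) \emph{simultaneously}. Each is individually routine: distinct lengths alone is a classical starter problem, and the residue--non-residue condition alone is handled by the multiplier $a$. But the $\pm$-symmetry forced by $-1$ being a fourth power means the natural character-correct pairing is the worst possible one for lengths, so the asymmetric boundary edges must be placed so as to be \emph{both} character-reversing \emph{and} landing on the correct residual length classes at the same time. It is the single arithmetic input $a(a^2-a+1)\in(\F_p^*)^4$ that is engineered to make both happen together, and confirming that across all boundary edges no character is wrong and no collision survives, uniformly in $p$, will be the crux of the argument.
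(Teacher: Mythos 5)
Your diagnosis of the obstruction is correct (when $p\equiv 1\pmod 8$ the lengths of any single-multiplier pairing form a set closed under negation, so they double-cover half the classes and miss the rest), and pairing $0$ with $c$ and using odd discrete-log shifts for condition (1) are indeed the right first moves. But the proposal never gets past strategy: no edge set is written down, and the two points you yourself flag as the crux --- how the boundary edges are chosen, and how $a(a^2-a+1)\in(\F_p^*)^4$ is actually used --- are left as expectations. Moreover, the one architectural commitment you do make, a bulk of edges $\{x,ax\}$ repaired by a \emph{bounded} number of boundary edges, provably cannot work. Write the bulk edges as $\{a^t,a^{t+1}\}$, $t\in T$. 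Vertex-disjointness forces $T$ to be an independent set in the cycle of length $p-1$, and distinctness of the bulk lengths $a^t(a-1)$ forces $T$ to contain at most one element of each antipodal pair $\{t,t+M\}$, $M=(p-1)/2$, since the classes of $a^t(a-1)$ and $a^{t'}(a-1)$ coincide exactly when $t-t'\equiv 0$ or $M \pmod{p-1}$. Already with a single boundary edge ($|T|=M-1$) one checks that the only admissible $T$ consist, up to rotation, of the even exponents in one half-interval together with the odd exponents in the other; the two uncovered vertices then have exponents differing by $M\pm 1$, so the forced boundary edge has length in the class of $a^{w}(a+1)$ for some $w$, while the unique missed class is that of $a^{w}(a-1)$. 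Equality of these classes would force $a+1=\pm(a-1)$, which is impossible in any $\F_p$. This failure is independent of the choice of $a$, so no hypothesis on $a$ can rescue it: the symmetry-breaking cannot be confined to an $O(1)$ ``seam,'' because the defect of the single-multiplier pattern is structural, not local.

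The paper's construction breaks the symmetry globally, which is what your sketch is missing. The exponents are split into four linear-sized classes $I_1,\dots,I_4$ (by residue mod $4$ and by half), and each class gets its own odd shift: $\{a^i,a^{i+M+1}\}$ on $I_1$, $\{a^i,a^{i+M-3}\}$ on $I_2$, and $\{a^i,a^{i+M-1}\}$ on $I_3\cup I_4$ (multiplicatively: multipliers $-a$, $-a^{-3}$, $-a^{-1}$). Odd shifts make condition (1) automatic, coverage is a mod-$4$ bookkeeping check, and a length collision between two families reduces, via $a^M=-1$, to $a^{i-j}=\pm 1/a$, $\pm a^2/(a^2-a+1)$, or $\pm(a^2-a+1)/a^3$, where $i-j$ has a known residue mod $4$. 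The hypothesis $a(a^2-a+1)\in(\F_p^*)^4$, together with $-1\in(\F_p^*)^4$, places each right-hand side in a quartic or quadratic coset incompatible with the left-hand side. So the quartic condition is spent on ruling out collisions \emph{between families of size about $p/4$ each}, not on a bounded set of exceptional edges near a boundary; any repair of your outline would have to adopt this kind of global, multi-multiplier structure.
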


Let $p$ be an odd prime with $p\equiv 1\pmod 8$ and set
\[
M:=\frac{p-1}{2}.
\]
Then $M\equiv 0\pmod 4$ and $p-1=2M$.
Fix a primitive root $a\in\F_p^*$, so
\[
a^{M}=a^{(p-1)/2}\equiv -1\pmod p.
\]
We label vertices of $K_{p+1}$ as 
\[V(K_{p+1})=\{0,c,1,a,\ldots,a^{p-2}\}.\]
We view the exponents of $a$ modulo $p-1=2M$, so that $a^{t+2M}=a^t$ for all integers $t$. In particular, $a^{2M}=1=a^0$.

We now construct a $1$-factor of $K_{p+1}$.
First, let us pair $0$ with $c$.
To pair other vertices, we define the following four index sets
\begin{align*}
I_1&:=\{i: \ i\equiv 1 \pmod 4, \quad 1\le i\le M-3 \},\\
I_2&:=\{i:  i\equiv 3\pmod 4, \quad 3\le i\le M-1\},\\
I_3&:=\{i: \ i\equiv 1\pmod 4, \quad M+1\le i\le 2M-3\},\\
I_4&:=\{i: i\equiv 3\pmod 4, \quad M+3\le i\le 2M-1\}.
\end{align*}
Consider the edge set $\mathcal{M}$ consisting of
\begin{align*}
E_1(i)&:=\{a^i,a^{i+M+1}\} \quad \text{for each } i\in I_1,\\
E_2(i)&:=\{a^i,a^{i+M-3}\} \quad \text{for each }i\in I_2,\\
E_3(i)&:=\{a^i,a^{i+M-1}\} \quad \text{for each }i\in I_3,\\
E_4(i)&:=\{a^i,a^{i+M-1}\} \quad \text{for each }i\in I_4. 
\end{align*}
Next, we show that the above construction satisfies the required conditions in \cref{prob:main} by establishing the following three claims. 

\smallskip
First, we show that $\mathcal{M}$ is indeed a $1$-factor on the vertex set $$V(K_{p+1})\setminus\{0,c\}=\{1, a^1,a^2,\dots,a^{2M-1}\}.$$ Equivalently, we show that every vertex in $V(K_{p+1})\setminus\{0,c\}$ is used exactly once in $\mathcal{M}$.

\begin{claim}\label{lem:all vertices are used}
Let $M=(p-1)/2$.
\begin{enumerate}
\item If $i \in I_1$, then $i+M+1$ lies in the second half $\{M,\dots,2M-1\}$, and satisfies
\[i+M+1\equiv 2\pmod 4.\]
\item If $i \in I_2$, then $i+M-3$ lies in the second half $\{M,\dots,2M-1\}$, and satisfies
\[i+M-3\equiv 0\pmod 4.\]
\item If $i \in I_3$, then
$(i+M-1)\bmod 2M = i-M-1$,
and this lies in the first half $\{0,\dots,M-1\}$, and satisfies
\[ i-M-1\equiv 0\pmod 4.\]
\item If $i \in I_4$, then $(i+M-1)\bmod 2M = i-M-1$,
and this lies in the first half $\{0,\dots,M-1\}$, and satisfies
\[i-M-1\equiv 2\pmod 4.\]
\end{enumerate}
\end{claim}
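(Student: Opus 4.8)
The plan is to verify each of the four parts by the same elementary recipe, relying on the single structural fact that $p\equiv 1\pmod 8$ forces $M=(p-1)/2\equiv 0\pmod 4$. First I would record this congruence explicitly, since it is exactly what makes every residue-modulo-$4$ computation collapse to a computation involving $i$ alone.

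For each part the verification splits into a range check and a congruence check. For the range, I would substitute the two endpoints of the allowed interval of $i$ into the relevant shift. Thus in part (1), the bound $1\le i\le M-3$ gives $M+2\le i+M+1\le 2M-2$, placing $i+M+1$ in the second half $\{M,\dots,2M-1\}$ with no reduction needed; part (2) is identical in spirit, with $3\le i\le M-1$ yielding $M\le i+M-3\le 2M-4$. For the congruences, I would invoke $M\equiv 0\pmod 4$ directly: $i+M+1\equiv i+1\equiv 2\pmod 4$ when $i\equiv 1$, and $i+M-3\equiv i-3\equiv 0\pmod 4$ when $i\equiv 3$.

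Parts (3) and (4) are the only places where a genuine reduction modulo $2M$ intervenes, and this is the one point requiring care. Here the raw shift $i+M-1$ exceeds $2M-1$: in part (3), $M+1\le i\le 2M-3$ gives $2M\le i+M-1\le 3M-4$, so subtracting $2M$ yields $(i+M-1)\bmod 2M=i-M-1$, which then lies in $\{0,\dots,M-4\}\subseteq\{0,\dots,M-1\}$; part (4) is analogous, with $M+3\le i\le 2M-1$ producing $i-M-1\in\{2,\dots,M-2\}$. The residues follow from $i-M-1\equiv i-1\pmod 4$, which is $0$ for $i\equiv 1$ and $2$ for $i\equiv 3$. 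The argument is otherwise pure bookkeeping; the only subtlety worth flagging is that in (3) and (4) one must reduce $i+M-1$ modulo $2M$ before reading off either the half it occupies or its residue class.
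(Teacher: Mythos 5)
Your proposal is correct and follows essentially the same route as the paper: endpoint substitution for the range checks and reduction modulo $4$ using $M\equiv 0\pmod 4$ for the congruences. The only (welcome) difference is that in parts (3) and (4) you explicitly verify $2M\le i+M-1$ before subtracting $2M$, a step the paper leaves implicit when it passes from $i+M-1$ to $i-M-1$.
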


\begin{proof}
(1) Since $i\ge 1$, we have $i+M+1\ge M+2\ge M$, and since $i\le M-3$, we have
$i+M+1\le 2M-2\le 2M-1$, thus $i+M+1$ is in the second half.
Since $i \equiv 1 \pmod 4$ and $M \equiv 0 \pmod 4$, we have 
$i+M+1\equiv 2\pmod 4$.

(2) Similarly, $i+M-3\ge M$ and $i+M-3\le 2M-4$, so it lies in the second half.
Since $i \equiv 3 \pmod 4$ and $M \equiv 0 \pmod 4$, we have 
$i+M-3\equiv 0\pmod 4$.

(3) Since $i\le 2M-3$, $i-M-1\le M-4$. Also, since $i\ge M+1$, we have $i-M-1\ge0$, so it lies in the first half. Using $i\equiv 1\pmod 4$ and $M\equiv 0\pmod 4$, we have $i-M-1\equiv 0\pmod 4$.

(4) The range $M+3\le i\le 2M-1$
implies $2\le i-M-1\le M-2$, so it lies in the first half. The conditions $i\equiv 3\pmod 4$ and $M\equiv 0\pmod 4$ imply
$i-M-1\equiv 2\pmod 4$.
\end{proof}

Since the images of $I_1, I_2, I_3$ and $I_4$ under these shifts lie in disjoint congruence classes modulo $4$ within the first and second half ranges, no vertex is repeated.
In addition, $|I_1|=|I_2|=|I_3|=|I_4|=M/4$, so $|\mathcal M|=|I_1|+|I_2|+|I_3|+|I_4|=M$.
Therefore, $\mathcal{M}$ is a $1$-factor on $V(K_{p+1})\setminus\{0,c\}$.

We next verify that it satisfies the quadratic residue/non-residue condition.

\begin{claim}\label{lem:QNR}
Each edge in $\mathcal{M}$ joins a quadratic residue and a quadratic non-residue.
\end{claim}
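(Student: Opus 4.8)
The plan is to reduce the quadratic-residue condition to a parity statement about exponents. Since $a$ is a primitive root of $\F_p^*$, the group $\F_p^*$ is cyclic of order $p-1=2M$, and its quadratic residues form the index-$2$ subgroup of even powers. Thus $a^j$ is a quadratic residue if and only if $j$ is even, and a quadratic non-residue if and only if $j$ is odd. Consequently, an edge of the form $\{a^i,a^{i+d}\}$ joins a quadratic residue to a quadratic non-residue precisely when $i$ and $i+d$ have opposite parities, i.e.\ precisely when the shift $d$ is odd.

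With this reduction in hand, the verification is a matter of recording the shift in each of the four families and checking it is odd. The families $E_1,E_2,E_3,E_4$ use shifts $M+1$, $M-3$, $M-1$, and $M-1$ respectively. Since $M=(p-1)/2\equiv 0\pmod 4$ is even, each of $M+1,\,M-3,\,M-1$ is odd, so every edge in $\mathcal{M}$ has an odd shift and therefore joins a quadratic residue and a quadratic non-residue. Alternatively, one can read the conclusion straight off \cref{lem:all vertices are used}: for each family that claim places the two endpoint exponents in distinct residue classes modulo $4$, with one exponent among $\{1,3\}$ and the other among $\{0,2\}$, which already forces opposite parity.

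I expect no genuine obstacle here; the argument is routine parity bookkeeping. The two points to be careful about are (i) that the characterization ``$a^i$ is a quadratic residue exactly when $i$ is even'' relies essentially on $a$ being a \emph{primitive} root, and (ii) correctly tracking the four shifts. It is worth noting that this claim does \emph{not} use the quartic hypothesis $a(a^2-a+1)\in(\F_p^*)^4$; that assumption should be needed only for the remaining claim on pairwise distinct edge lengths. (The separately defined pairing $\{0,c\}$, which is not part of $\mathcal{M}$, is handled by the stated convention that $0$ is a quadratic residue and $c$ a quadratic non-residue, so it likewise joins a residue to a non-residue.)
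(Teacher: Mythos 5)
Your proof is correct and follows essentially the same route as the paper: both reduce the statement to parity of exponents via the primitive-root characterization of quadratic residues, and your ``odd shift'' phrasing is just a repackaging of the paper's observation that the indices in $I_1,\dots,I_4$ are odd while \cref{lem:all vertices are used} makes the partner exponents even. Your side remarks (parity is preserved mod $2M$, the quartic hypothesis is not needed here, and the edge $\{0,c\}$ is covered by convention) all match the paper's treatment.
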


\begin{proof}
Since $a$ is a primitive root, an element $a^j\in\F_p^*$ is a quadratic residue if and only if $j$ is even.
By construction, the indices in $I_1, I_2, I_3, I_4$ are all odd, and thus the first endpoint of each edge $E_k(\cdot)$ with $k=1,2,3,4$ is a quadratic non-residue.
On the other hand, \cref{lem:all vertices are used} shows that the second endpoint indices are all even. Thus, they are all quadratic residues.
\end{proof}

Together with the edge $\{0,c\}$, this shows condition (1) in \cref{prob:main} holds for the entire 1-factor. It remains to verify condition (2) in \cref{prob:main} that the edge lengths in $\mathcal{M}$ are pairwise distinct under the extra assumption made in \cref{prop:main}.

\begin{claim}
If $a(a^2-a+1)\in(\F_p^*)^4$, then all edge lengths in $\mathcal{M}$ are pairwise distinct.
\end{claim}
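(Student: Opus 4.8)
The plan is to translate the condition on edge lengths into a statement about discrete logarithms modulo $M$, and then verify distinctness by a bookkeeping argument modulo $4$. First I would record the length of each edge. Since $a^M=-1$, for an edge $\{a^i,a^{i+s}\}$ the difference is $a^i-a^{i+s}=a^i(1-a^s)$, and reducing the relevant shifts $s\in\{M+1,\,M-3,\,M-1\}$ gives
\begin{align*}
a^i-a^{i+M+1} &= a^i(1+a), \\
a^i-a^{i+M-3} &= a^i(1+a^{-3}),\\
a^i-a^{i+M-1} &= a^i(1+a^{-1}),
\end{align*}
for the families $E_1$, $E_2$, and $E_3,E_4$ respectively. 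Because $-1=a^M$, the two elements $\{x-y,\,y-x\}$ of a length are $a^t$ and $a^{t+M}$ whenever $x-y=a^t$; hence a length is completely determined by its exponent $t$ taken modulo $M$, and two edges share a length precisely when their exponents agree modulo $M$.

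Next I would compute these exponents. Writing $1+a=a^u$ and $a^2-a+1=a^w$, the factorization $a^3+1=(a+1)(a^2-a+1)$ gives $1+a^{-3}=a^{-3}(a+1)(a^2-a+1)=a^{u+w-3}$, while $1+a^{-1}=a^{-1}(1+a)=a^{u-1}$. After subtracting the common shift $u$, the length-exponents modulo $M$ are: $i$ on $I_1$; $i-3+w$ on $I_2$; $i-1$ (reduced into the first half) on $I_3$; and $i-1$ (reduced into the first half) on $I_4$. Using \cref{lem:all vertices are used} together with $M\equiv 0\pmod 4$, these four families fall into the residue classes $1,\ w,\ 0,\ 2\pmod 4$, respectively.

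The hypothesis then enters cleanly. Since $\F_p^*$ is cyclic of order $2M$ and $4\mid 2M$, the fourth powers are exactly the $a^{4k}$, so $a(a^2-a+1)=a^{w+1}\in(\F_p^*)^4$ if and only if $w\equiv 3\pmod 4$. Under this assumption the four families occupy the four distinct classes $1,3,0,2$ modulo $4$; because $4\mid M$, exponents from different families can never agree modulo $M$, so no two edges from different families share a length. Within a single family the exponents form an arithmetic progression of common difference $4$ and length $M/4$, spanning a range strictly less than $M$, hence are pairwise distinct modulo $M$. Together these give $M$ pairwise distinct lengths, which is the claim.

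I expect the main obstacle to be the second step: correctly identifying the exponent of $1+a^{-3}$ and recognizing that it is governed by $a^2-a+1$. This is the only point where the specific shift $M-3$ defining $E_2$ interacts with the hypothesis, and it is precisely the factorization $a^3+1=(a+1)(a^2-a+1)$ that converts the quartic condition $a(a^2-a+1)\in(\F_p^*)^4$ into the separation statement $w\equiv 3\pmod 4$ needed to pull $E_2$ off the other three families. Everything else is routine range and parity bookkeeping already supplied by \cref{lem:all vertices are used}.
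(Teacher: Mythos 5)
Your proof is correct, and it packages the argument in a genuinely different way from the paper, even though the algebraic ingredients are the same. The paper works pairwise: it characterizes equality of lengths by $a^{m-n}=\pm\frac{1-a^V}{1-a^U}$ (its \cref{eq:length}), computes the three ratios in \cref{eq:1}, \cref{eq:2}, \cref{eq:3} (which encode the same factorization $a^3+1=(a+1)(a^2-a+1)$ that you use), and then runs a case analysis over all pairs of families $E_1,\dots,E_4$, killing each potential collision by a separate quadratic- or quartic-residue computation. You instead attach to every edge a single invariant, the discrete logarithm of its difference, which is well defined modulo $M$ precisely because $-1=a^M$; distinctness of lengths then becomes distinctness of $M$ residues modulo $M$. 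Within a family this is a range argument (an arithmetic progression of span less than $M$), and across families it is the single observation that the four families land in the classes $1,\,w,\,0,\,2 \pmod 4$, where $a^2-a+1=a^w$, together with the translation of the hypothesis $a(a^2-a+1)\in(\F_p^*)^4$ into exactly $w\equiv 3\pmod 4$; since $4\mid M$, distinct classes modulo $4$ force distinct classes modulo $M$. What your route buys: the six cross-family cases collapse into one statement, the role of the hypothesis is isolated in a single congruence, and it becomes visible that the $M$ lengths are not merely pairwise distinct but form a complete residue system, i.e., every possible length occurs exactly once, which is what the translation construction ultimately exploits. What the paper's route buys: it never fixes discrete logarithms, staying entirely in the language of residue symbols, so nothing needs to be said about well-definedness of exponents. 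One small point that both you and the paper leave implicit and that you should state for completeness: $a+1\neq 0$ and $a^2-a+1\neq 0$ (so that your exponents $u$ and $w$ exist, and the paper's ratios make sense); this holds because $a$ has multiplicative order $p-1\geq 16$, whereas either vanishing would force $a^6=1$.
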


\begin{proof}
We verify that the lengths of all edges in $\mathcal{M}$ are pairwise distinct.
Note that, for two edges $\{a^m,a^{m+U}\}$ and $\{a^n,a^{n+V}\}$ with integers $m,n,U,V$, equality of their lengths is equivalent to
\begin{equation}\label{eq:length}
a^{m-n}=\pm\frac{1-a^V}{1-a^U}.    
\end{equation}
Since $a^M=-1$, we have 
\begin{align}
\frac{1-a^{M-1}}{1-a^{M+1}}&=\frac{1+\frac{1}{a}}{1+a}=\frac{1}{a},\label{eq:1}\\
\frac{1-a^{M-1}}{1-a^{M-3}}&=\frac{1+\frac{1}{a}}{1+\frac{1}{a^3}}
=\frac{a^2}{a^2-a+1},\label{eq:2}\\
\frac{1-a^{M-3}}{1-a^{M+1}}&=\frac{1+\frac{1}{a^3}}{1+a}
=\frac{a^2-a+1}{a^3}. \label{eq:3}
\end{align}

Let $1\leq k \leq 4$. First, we show that all edge lengths in $E_k$ are pairwise distinct. If $\ell(E_k(i))=\ell(E_k(j))$ for $i,j\in I_k$, then equation~\cref{eq:length} implies that $a^{i-j}=\pm1$, so $i-j\equiv 0$ or $M\pmod{2M}$. However, since $i,j\in I_k$, we have $|i-j|<M$. Thus, we must have $i=j$.

Next, we compare edges belonging to two different families $E_k$ in the following four cases.

\emph{Between $E_1$ and $E_2$}:
If $\ell(E_1(i))=\ell(E_2(j))$ for $i\in I_1$ and $j\in I_2$, then equations~\cref{eq:length} and~\cref{eq:3} give
\[
a^{i-j}=\pm\frac{a^2-a+1}{a^3}.
\]
Since $i\equiv 1 \pmod 4$ and $j\equiv 3 \pmod 4$, we have $i-j\equiv 2\pmod4$. Thus, the left-hand side lies in $(\F_p^*)^2\setminus(\F_p^*)^4$.
On the other hand, by the assumption, $(a^2-a+1)/a^3$ is a fourth power.
Since $p\equiv 1 \pmod8$, it follows that $-1\in(\F_p^*)^4$ and thus $-(a^2-a+1)/a^3$ is also a fourth power. Thus, both elements on the right-hand side belong to $(\F_p^*)^4$, a contradiction.
\smallskip

\emph{Between $E_1$ and $E_3$, or between $E_1$ and $E_4$}:
Suppose that $\ell(E_1(i))=\ell(E_3(j))$ for $i\in I_1$ and $j\in I_3$, or $\ell(E_1(i))=\ell(E_4(j))$ for $i\in I_1$ and $j\in I_4$. In both cases, by equations~\cref{eq:length} and~\cref{eq:1}, we have
\[
a^{i-j}=\pm\frac1a.
\]
The right-hand side is a quadratic non-residue, since $a$ is primitive and $p \equiv 1 \pmod 4$.
But since $i,j$ are odd, $i-j$ is even, and thus $a^{i-j}$ is a quadratic residue, which yields a contradiction. 
\smallskip

\emph{Between $E_2$ and $E_3$, or between $E_2$ and $E_4$}:
Suppose that $\ell(E_2(i))=\ell(E_3(j))$ for $i\in I_2$ and $j\in I_3$, or $\ell(E_2(i))=\ell(E_4(j))$ for $i\in I_2$ and $j\in I_4$. In both cases,  using equations~\cref{eq:length} and~\cref{eq:2}, we get
\[
a^{i-j}=\pm\frac{a^2}{a^2-a+1}.
\]
By the assumption, $a(a^2-a+1)$ is a quadratic residue. This implies that $\frac{a^2}{a^2-a+1}$ is a quadratic non-residue. Since $p\equiv 1 \pmod 8$, $-1$ is a quadratic residue. Thus, the right-hand side of the above equation is a quadratic non-residue. However, the left-hand side is a quadratic residue since $i-j$ is even, a contradiction. 
\smallskip

\emph{Between $E_3$ and $E_4$}:
If $\ell(E_3(i))=\ell(E_4(j))$ for $i\in I_3$ and $j\in I_4$, then equation~\cref{eq:length} implies that $a^{i-j}=\pm1$,
so $i-j\equiv 0$ or $M\pmod{2M}$.
But $i\equiv 1\pmod4$ and $j\equiv 3\pmod4$, so $i-j\equiv 2\pmod4$, which is impossible since
$0$ and $M$ are both multiples of $4$. 
\end{proof}

Combining the above claims finishes the proof of \cref{prop:main}.

\section{Primitive roots satisfying the quartic condition}\label{sec3}

In this section, we use character sum estimates to guarantee the existence of a primitive root satisfying the quartic condition in \cref{prop:main}, completing the proof of \cref{thm:main}.
Throughout the section, we assume $p\equiv 1 \pmod 8$ is a prime.

\subsection{Preliminaries}
Throughout the section, we use the following standard number-theoretic functions:
\begin{itemize}
    \item For each positive integer $n$, let $\tau(n)$ be the number of divisors of $n$ and $\omega(n)$ be the number of distinct prime divisors of $n$. 
    \item For each positive integer $n$, let $\varphi(n)$ be the number of positive integers at most $n$ that are coprime with $n$.
        \item For each prime $p$, let $\mathcal{G}(p)$ denote the set of primitive roots in $\F_p$. Note that $|\mathcal{G}(p)|=\varphi(p-1)$.
    \item For each positive integer $n$, let $\mu(n)$ be the M\"obius function, that is, $\mu(n)=(-1)^{\omega(n)}$ if $n$ is square-free, and $\mu(n)=0$ otherwise. 
\end{itemize}

\medskip

We need the following explicit bounds on $\tau(n)$ and $\varphi(n)$.

\begin{lemma}\label{lem:explicit}
Let $n\geq 3$ be an integer. Then we have
$$
\tau(n)\leq n^{\frac{1.5379\log 2}{\log \log n}},
$$
and
\[
\varphi(n) \geq \frac{n}{e^{\gamma}\,\log\log n \;+\; \frac{3}{\log\log n}},
\]
where  $\gamma$ is the Euler--Mascheroni constant.
\end{lemma}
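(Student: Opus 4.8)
My plan is to treat the two inequalities separately, in each case reducing to a known effective estimate rather than redeveloping the prime-number machinery from scratch. The totient bound I would derive directly from the Rosser--Schoenfeld inequalities. Starting from the identity
\[
\frac{n}{\varphi(n)}=\prod_{p\mid n}\Big(1-\frac1p\Big)^{-1},
\]
I would observe that, since $p/(p-1)$ is decreasing in $p$, for a fixed number of distinct prime factors the quantity $n/\varphi(n)$ is maximized by taking the smallest available primes; thus it suffices to control the product over primorials $n=\prod_{p\le x}p$. Mertens' third theorem, in the effective form of Rosser and Schoenfeld, gives $\prod_{p\le x}(1-1/p)^{-1}\le e^{\gamma}\log x$ up to explicit error terms, and for a primorial one has $\log n=\theta(x)$, so $\log\log n$ plays the role of $\log x$. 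This yields
\[
\frac{n}{\varphi(n)}<e^{\gamma}\log\log n+\frac{c}{\log\log n}
\]
for an absolute constant $c$. Rosser--Schoenfeld obtain $c=2.50637$ for $n\ge 3$ with a single exceptional primorial; enlarging $c$ to $3$ absorbs that exception and makes the inequality valid for all $n\ge 3$, which is exactly the stated bound. For this half I would simply quote the Rosser--Schoenfeld paper.

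For the divisor bound I would invoke the classical Wigert--Ramanujan argument. Writing $n=\prod_i p_i^{a_i}$ we have $\tau(n)=\prod_i(a_i+1)$, so for any $\epsilon>0$,
\[
\frac{\tau(n)}{n^{\epsilon}}=\prod_i\frac{a_i+1}{p_i^{\epsilon a_i}}.
\]
For every prime $p_i\ge 2^{1/\epsilon}$ the elementary inequality $a_i+1\le 2^{a_i}\le p_i^{\epsilon a_i}$ shows the corresponding factor is at most $1$, so only the primes below $2^{1/\epsilon}$ contribute; bounding that finite product and then choosing the near-optimal $\epsilon=\tfrac{c\log 2}{\log\log n}$ produces an inequality of the shape $\tau(n)\le n^{c\log 2/\log\log n}$. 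Optimizing the constant subject to validity for all $n\ge 3$ yields the admissible value $c=1.5379$ (note that $1.5379\log 2\approx 1.066$, the form in which this bound is often stated). Since the sharp explicit constant has already been extracted in the literature, I would again cite the corresponding explicit divisor bound rather than reprove it.

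The only genuine difficulty in either part is pinning down the precise numerical constants ($3$ and $1.5379$) so that the inequalities hold uniformly for every integer $n\ge 3$ rather than merely asymptotically; this is what forces one into the effective version of Mertens' theorem and the careful optimization of $\epsilon$ in the divisor estimate, and it is also why the clean constant $3$ is preferable to the sharper $2.50637$, as it removes the exceptional primorial. Because both computations are classical and appear in explicit form in the references, the proof reduces to quoting them, and this suffices for the character-sum application developed in \cref{sec3}.
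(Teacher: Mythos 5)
Your proposal is correct and ultimately takes the same route as the paper: the paper's proof consists entirely of citing Rosser--Schoenfeld \cite[Theorem 15]{RS62} for the totient bound and Nicolas--Robin \cite{NR83} for the divisor bound, which is exactly what you do (your additional sketches of the Mertens/primorial and Wigert-type arguments, and your observation that the constant $3$ absorbs the exceptional primorial in the $2.50637$ version, are accurate but not needed beyond the citations).
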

\begin{proof}
The first estimate for the divisor function is due to Nicolas and Robin \cite{NR83}, and the second estimate for the Euler totient function is due to  Rosser and Schoenfeld \cite[Theorem 15]{RS62}.
\end{proof}

We also recall Weil's bound for complete character sums (see, for example \cite[Theorem 5.41]{LN97}). We will use Weil's bound to estimate character sums over the set of primitive roots $\mathcal{G}(p)$.

\begin{lemma}[Weil's bound]\label{lem:Weil}
Let $p$ be a prime. Let $\chi$ be a multiplicative character of $\F_p$ of order $k>1$, and let $f \in \F_p[x]$ be a monic polynomial that is not a $k$-th power of a polynomial in $\F_p[x]$. 
Let $s$ be the number of distinct roots of $f$ in the algebraic closure of $\F_p$. Then for each $c \in \F_p$, 
$$\bigg |\sum_{x\in\mathbb{F}_p}\chi\big(cf(x)\big) \bigg|\le(s-1)\sqrt p.$$
\end{lemma}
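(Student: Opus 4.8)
The plan is to derive Weil's bound from the Riemann Hypothesis for curves over finite fields, by reading the character sum as (a piece of) the number of rational points on a superelliptic curve. The case $c=0$ is trivial, since then $\chi(cf(x))=0$ for all $x$, so assume $c\ne 0$. A multiplicative character of order $k$ exists only when $k\mid p-1$, so $p\nmid k$; moreover, for each $n\ge 1$ the lift $\chi^{(n)}:=\chi\circ N_{\F_{p^n}/\F_p}$ is again a character of exact order $k$, the norm being surjective. For a nonzero $u\in\F_{p^n}$ the number of solutions of $y^k=u$ equals $\sum_{j=0}^{k-1}(\chi^{(n)})^j(u)$. Taking $u=cf(x)$, summing over $x$, and isolating the $j=0$ term shows that the number $N_n$ of affine $\F_{p^n}$-points on $C:y^k=cf(x)$ satisfies
$$
N_n=p^n+\sum_{j=1}^{k-1}S_n^{(j)},\qquad S_n^{(j)}:=\sum_{x\in\F_{p^n}}(\chi^{(n)})^j\big(cf(x)\big),
$$
and the quantity to be bounded is precisely $S_1^{(1)}$.

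I would then attach to the character the L-function $L(t):=\exp\!\big(\sum_{n\ge1}S_n^{(1)}t^n/n\big)$, and likewise for the other powers $\chi^j$. The decisive structural input is that $L(t)$ is a polynomial; this is Grothendieck's rationality theorem, or equivalently it follows from the factorization of the zeta function of (the smooth model of) $C$ into the $k$ L-functions indexed by the characters of the cyclic cover $C\to\mathbb{A}^1$. Writing $L(t)=\prod_i(1-\alpha_i t)$ gives $S_1^{(1)}=-\sum_i\alpha_i$, so the bound reduces to controlling the number and size of the $\alpha_i$. Their number is $\deg L$, which a standard tame conductor (Euler--Poincar\'e) computation pins down: as a function of $x$, $\chi(cf(x))=\chi(c)\prod_{\text{roots }a}\chi(x-a)^{e_a}$ is ramified only at the $s$ distinct roots $a$ of $f$ and possibly at $\infty$, all tamely since $p\nmid k$, and the hypothesis that $f$ is not a $k$-th power guarantees some $e_a$ is not divisible by $k$, so the associated character is genuinely nontrivial. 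The Euler characteristic of $\mathbb{P}^1$ then forces $\deg L\le(s+1)-2=s-1$.

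The deep ingredient, and the main obstacle, is the Riemann Hypothesis for curves: every reciprocal root satisfies $|\alpha_i|=\sqrt p$. Granting this, the triangle inequality immediately gives $|S_1^{(1)}|=\big|\sum_i\alpha_i\big|\le(\deg L)\sqrt p\le(s-1)\sqrt p$, which is exactly the asserted bound. Everything preceding this is formal: the reduction to point counting, the rationality of $L$, and the degree count are routine once the cover $y^k=cf(x)$ is in place. To keep the argument self-contained one could instead invoke the elementary Stepanov--Bombieri method, which bounds $\sum_x\chi(cf(x))$ directly by constructing an auxiliary polynomial that vanishes to high order along the fibres where $\chi$ takes a prescribed value and estimating its degree in two incompatible ways to force cancellation. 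Either route concentrates the entire difficulty in the square-root estimate $|\alpha_i|=\sqrt p$, which is Weil's theorem; the surrounding reduction, rationality, and ramification count are standard.
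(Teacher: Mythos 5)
The paper does not prove this lemma at all: it is quoted as a classical result with a pointer to Lidl--Niederreiter \cite[Theorem 5.41]{LN97}, so any comparison is between your sketch and a black-box citation. Your outline reconstructs the standard cohomological derivation correctly in its main lines: the reduction of the point count on $y^k=cf(x)$ to the sums $S_n^{(j)}$ is exact (including at the roots of $f$, where the convention $\chi(0)=0$ makes the bookkeeping come out cleanly), the observation that ``$f$ not a $k$-th power'' is precisely what makes the Kummer character geometrically nontrivial (some $e_a\not\equiv 0 \bmod k$; monicity ensures that otherwise $f$ would literally be a $k$-th power in $\F_p[x]$) is placed at the right spot, and the tame Euler--Poincar\'e count over $\mathbb{P}^1$ minus the $s$ roots and $\infty$ does give $\deg L\le s-1$ once $H^2_c$ vanishes. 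Two small corrections: first, you should write $|\alpha_i|\le\sqrt p$ rather than $|\alpha_i|=\sqrt p$ --- for the $L$-function of a nontrivial character on an open curve the Frobenius eigenvalues on $H^1_c$ have weights $\le 1$, and weight-$0$ eigenvalues genuinely occur (e.g.\ from punctures where the local monodromy has invariants); equality holds only after passing to the complete nonsingular model, which you do not need. Second, bounding $S_1^{(1)}$ via the zeta function of the curve $C$ alone would entangle all the powers $\chi^j$ (some of which could be trivial on the cover); your move to the individual $L$-function $L(t)$ of $\chi$ itself is the right fix, and you should present that, not the curve count, as the actual argument, with the curve serving only as motivation. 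With these adjustments the sketch is a correct proof modulo its one declared black box, the Riemann Hypothesis for curves --- so in substance your proposal and the paper occupy the same logical position (both ultimately cite Weil), but yours usefully isolates exactly where each hypothesis of the lemma (order $k$, $s$ distinct roots, non-$k$-th-power) enters the degree and weight bounds.
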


\subsection{Character sum estimates}
In this section, we prove the following proposition, which will be used in the proof of our main theorem.
\begin{proposition}\label{prop:charsum}
Let $p \equiv 1 \pmod 8$ be a prime. If
\begin{equation}\label{eq:omega}
\varphi(p-1)>2^{\omega(p-1)}(6\sqrt{p}+3),
\end{equation}    
then there exists a primitive root $a \in \F_p$ such that $a(a^2-a+1)$ is a nonzero $4$-th power.
\end{proposition}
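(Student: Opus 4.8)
The strategy is to detect primitive roots $a$ with $a(a^2-a+1) \in (\F_p^*)^4$ via a character-sum indicator, then bound the error terms by Weil's bound and show the main term dominates under the hypothesis \eqref{eq:omega}. Let $g$ be a fixed multiplicative character of $\F_p^*$ of order $4$ (which exists since $4 \mid p-1$), so that the indicator of the fourth-power condition is
\[
\frac{1}{4}\sum_{j=0}^{3} g^{j}\big(a(a^2-a+1)\big),
\]
which equals $1$ when $a(a^2-a+1)$ is a nonzero fourth power and $0$ otherwise. For the primitive-root condition I would use the standard Vinogradov-type indicator
\[
\frac{\varphi(p-1)}{p-1}\sum_{d \mid p-1}\frac{\mu(d)}{\varphi(d)}\sum_{\ord(\psi)=d}\psi(a),
\]
where the inner sum runs over multiplicative characters $\psi$ of order exactly $d$. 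Multiplying these two indicators and summing over all $a \in \F_p^*$ gives the count $N$ of primitive roots satisfying the quartic condition.

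\smallskip
\textbf{Main term and error splitting.} First I would expand the product of indicators and interchange summations, so that
\[
N = \frac{\varphi(p-1)}{4(p-1)}\sum_{d \mid p-1}\frac{\mu(d)}{\varphi(d)}\sum_{\ord(\psi)=d}\;\sum_{j=0}^{3}\;\sum_{a \in \F_p^*} \psi(a)\,g^{j}\big(a(a^2-a+1)\big).
\]
I would isolate the principal contribution coming from $d=1$ (so $\psi$ trivial) and $j=0$ (so $g^j$ trivial): this inner sum is $\sum_{a\in\F_p^*} 1 = p-1$, contributing a main term of exactly $\tfrac{\varphi(p-1)}{4}$ to $N$. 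Every other term is an error term, to be bounded by Weil's bound (Lemma \ref{lem:Weil}). The character sum $\sum_a \psi(a)g^j(a(a^2-a+1))$ has the form $\sum_a \chi(a)\,g^j((a^2-a+1))\cdot(\text{monomial adjustment})$; writing $\psi(a)g^j(a) = \chi(a)$ for a single character $\chi$ and keeping the polynomial factor $(a^2-a+1)^{?}$, each nontrivial term is governed by a polynomial whose distinct-root count $s$ is at most $3$ (the roots being $0$ and the two roots of $a^2-a+1$). The key point requiring care is verifying that in each nontrivial term the relevant combined character is nontrivial and the polynomial is not a perfect $k$-th power, so that Weil applies and yields a bound of at most $2\sqrt p$ per term.

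\smallskip
\textbf{Counting terms and closing the estimate.} I would count the error terms: there are $\sum_{d\mid p-1}|\mu(d)| = 2^{\omega(p-1)}$ characters $\psi$ (only squarefree $d$ survive), four choices of $j$, and I must remove the single main term, giving at most $4\cdot 2^{\omega(p-1)} - 1$ error terms. Bounding $|\mu(d)|/\varphi(d) \le 1$ and each Weil sum by $2\sqrt p$, the total error in $N$ is at most
\[
\frac{\varphi(p-1)}{4(p-1)}\big(4\cdot 2^{\omega(p-1)}-1\big)\cdot 2\sqrt{p},
\]
and after a crude simplification (absorbing constants, using $p-1 \ge \tfrac{p}{2}$ type bounds) this is dominated by a quantity of shape $\tfrac{\varphi(p-1)}{4(p-1)} \cdot 2^{\omega(p-1)}(6\sqrt p + 3)$. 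Thus $N > 0$ precisely when $\varphi(p-1) > 2^{\omega(p-1)}(6\sqrt p + 3)$, which is hypothesis \eqref{eq:omega}.

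\smallskip
\textbf{Expected obstacle.} The routine parts are the indicator expansion and the term count; the delicate step is the case analysis ensuring Weil's bound is applicable to each individual error sum, namely confirming that when $(d,j)\neq(1,0)$ the character is genuinely nontrivial and $a(a^2-a+1)$ (raised to the appropriate power and twisted by $\psi$) is not a $k$-th power of a polynomial. This requires checking that the roots $0$ and those of $a^2-a+1$ do not conspire with the character orders to produce a perfect power, and tracking the exact constant in the error bound so that it matches the clean form $6\sqrt p + 3$ in \eqref{eq:omega} rather than a larger constant.
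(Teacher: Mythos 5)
Your strategy---multiplying Vinogradov's primitive-root indicator by the fourth-power indicator and summing over all of $\F_p^*$---is a genuinely different decomposition from the paper's. The paper never sums over all of $\F_p^*$: it parametrizes $\mathcal{G}(p)=\{g_0^k: 1\le k\le p-1,\ \gcd(k,p-1)=1\}$ by a fixed primitive root $g_0$, applies M\"obius inversion to the exponent $k$, and is led to the sums $\sum_{d\mid p-1}\frac{\mu(d)}{d}\sum_{x\in\F_p^*}\chi\bigl(f(x^d)\bigr)$ with $f(x)=x(x^2-x+1)$; Weil's bound applied to $f(x^d)$ (at most $2d+1$ distinct roots) gives $2\sqrt p+1$ after dividing by $d$. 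That bookkeeping produces a main term of exactly $\varphi(p-1)$ and exactly \emph{three} Weil-type error sums (one for each $j\in\{1,2,3\}$), each at most $2^{\omega(p-1)}(2\sqrt p+1)$; the hypothesis \eqref{eq:omega} is tailored to precisely this count. Your route keeps a degree-$3$ polynomial but twists by an extra character $\psi$; it is viable, and to apply \cref{lem:Weil} to the mixed sums $\sum_a\psi(a)g^j(f(a))$ one writes $\psi$ and $g^j$ as powers of a single character $\chi$ of order $L=\mathrm{lcm}(\ord\psi,\ord(g^j))$, so the sum becomes $\sum_a\chi\bigl(a^{u+v}(a^2-a+1)^v\bigr)$ with $0\le u,v<L$ not both zero; such a polynomial is never an $L$-th power, so Weil indeed gives $2\sqrt p$ per term.

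The genuine gap is in your final step. You bound all $4\cdot 2^{\omega(p-1)}-1$ weighted nontrivial terms by $2\sqrt p$, giving an error of $(4\cdot 2^{\omega(p-1)}-1)\cdot 2\sqrt p\approx 8\cdot 2^{\omega(p-1)}\sqrt p$, which must be beaten by your main term $p-1$ (note the factor $\frac{\varphi(p-1)}{4(p-1)}$ multiplies both, so what matters is $p-1$ versus the error, not $\varphi(p-1)$). Your claimed ``crude simplification''---that this error is dominated by $2^{\omega(p-1)}(6\sqrt p+3)$---is false whenever $\omega(p-1)\ge 1$ and $p$ is moderately large (compare the leading coefficients $8$ and $6$), so the conclusion does not follow from \eqref{eq:omega} as you assert. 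Two one-line repairs exist. (i) Do not feed the terms with $j=0$ and $\psi$ nontrivial to Weil: by orthogonality $\bigl|\sum_{a\colon f(a)\ne 0}\psi(a)\bigr|\le 2$, so they contribute $O(2^{\omega(p-1)})$ in total, leaving only $3\cdot 2^{\omega(p-1)}$ weighted terms of size $2\sqrt p$; the error then is at most $2^{\omega(p-1)}(6\sqrt p+2)$ plus an $O(1)$ correction, and \eqref{eq:omega} together with $p-1\ge\varphi(p-1)$ closes the argument. (ii) Alternatively, keep your cruder bound and note that $p-1$ is even, so $\varphi(p-1)\le(p-1)/2$ and \eqref{eq:omega} forces $p-1>2\cdot 2^{\omega(p-1)}(6\sqrt p+3)=12\cdot 2^{\omega(p-1)}\sqrt p+6\cdot 2^{\omega(p-1)}$, which does exceed $8\cdot 2^{\omega(p-1)}\sqrt p-2\sqrt p$. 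Two smaller corrections: your main term is $p-1-\epsilon$ with $\epsilon\in\{0,2\}$ the number of roots of $a^2-a+1$ in $\F_p$ (the indicator vanishes there), not exactly $p-1$; and your claim that there are $\sum_{d\mid p-1}|\mu(d)|=2^{\omega(p-1)}$ characters $\psi$ conflates the actual character count, $\sum_{d\ \mathrm{squarefree}}\varphi(d)=\rad(p-1)$, with the weighted count $2^{\omega(p-1)}$---your displayed error bound is correct only in the weighted sense, where each order-$d$ character carries weight $1/\varphi(d)$.
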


Before we prove \cref{prop:charsum}, we first point out that it immediately implies Theorem~\ref{thm:main} for all sufficiently large primes $p$. Indeed, as $p \to \infty$, Lemma~\ref{lem:explicit} implies that $\varphi(p-1)=p^{1-o(1)}$ and $2^{\omega(p-1)}\leq \tau(p-1)=p^{o(1)}$, and thus inequality~\eqref{eq:omega} holds for all sufficiently large primes $p$. Combining Propositions~\ref{prop:main} and~\ref{prop:charsum}, this yields Theorem~\ref{thm:main} for all sufficiently large primes $p$. Handling small primes requires some extra analysis, and that will be discussed in Section~\ref{subsec:finish}.

Next, we use character sum estimates to prove Proposition~\ref{prop:charsum}.
\begin{proof}[Proof of Proposition~\ref{prop:charsum}]
Let $\psi$ be a multiplicative character of order $4$ of $\F_p$. 
To show that there exists a primitive root $a$ such that $a(a^2-a+1)$ is a $4$-th power, by the orthogonality relation, it is equivalent to show the following:
$$
\sum_{j=0}^{3} \sum_{g \in \mathcal{G}(p)} \psi^j\big(g(g^2-g+1)\big)>0. 
$$
Thus, it suffices to show the following inequality:
\begin{equation}\label{eq:lb}
 \varphi(p-1)-\sum_{j=1}^{3} \bigg|\sum_{g \in \mathcal{G}(p)} \psi^j\big(g(g^2-g+1)\big)\bigg|>0.
\end{equation}    
Here we used the fact that $g(g^2-g+1)\ne0$ for every $g\in\mathcal G(p)$: since the roots of $x^2-x+1$ have order $6$, no primitive root of
$\F_p$ is a root of this polynomial when $p\equiv1\pmod8$.

To establish inequality~\eqref{eq:lb}, we employ Weil's bound to prove the following claim.
\begin{claim}
For $j\in \{1,2,3\}$, 
$$
\bigg|\sum_{g \in \mathcal{G}(p)} \psi^j\big(g(g^2-g+1)\big)\bigg| \leq 2^{\omega(p-1)} (2\sqrt{p}+1).
$$    
\end{claim}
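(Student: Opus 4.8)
The plan is to express the sum over primitive roots $\mathcal{G}(p)$ using the standard M\"obius-sieve indicator for primitive roots, and then apply Weil's bound (\cref{lem:Weil}) term by term. Recall that the indicator function for $g$ being a primitive root can be written as
\[
\mathbf{1}_{g\in\mathcal{G}(p)}=\frac{\varphi(p-1)}{p-1}\sum_{d\mid p-1}\frac{\mu(d)}{\varphi(d)}\sum_{\ord(\chi)=d}\chi(g),
\]
where the inner sum runs over all multiplicative characters $\chi$ of $\F_p^*$ of exact order $d$. Substituting this into the left-hand side of the claim and interchanging the order of summation, each term becomes a complete multiplicative character sum of the form $\sum_{g\in\F_p^*}\chi(g)\,\psi^j\big(g(g^2-g+1)\big)$ (with a negligible adjustment for $g=0$, where the summand vanishes since $\psi^j(0)=0$).

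Next I would bound each such complete sum by Weil's bound. The key point is that for each fixed $d\mid p-1$ and each character $\chi$ of order $d$, the product $\chi\cdot\psi^j$ is a multiplicative character whose associated polynomial argument is $g^{a}(g^2-g+1)^{j}$ for appropriate exponents, and I must check that this is \emph{not} a perfect $k$-th power of a polynomial, where $k=\ord(\chi\psi^j)$. The polynomial $x(x^2-x+1)$ has three distinct roots in $\overline{\F_p}$ (namely $0$ and the two primitive sixth roots of unity), so $s=3$ and Weil's bound gives a bound of $(s-1)\sqrt{p}=2\sqrt{p}$; accounting for the missing $g=0$ term and the precise combinatorics of the M\"obius sieve then yields the factor $2\sqrt{p}+1$. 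Summing over the divisors $d\mid p-1$, the weights $|\mu(d)|/\varphi(d)$ combined with the count of characters of each order telescope, and the total number of nontrivial contributing characters is controlled by $2^{\omega(p-1)}$, producing the claimed bound $2^{\omega(p-1)}(2\sqrt{p}+1)$.

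The main obstacle I anticipate is verifying the non-degeneracy hypothesis of Weil's bound in all cases: one must confirm that $\chi^{}\psi^j$ applied to $g(g^2-g+1)$ never reduces to a constant character of a perfect-power polynomial, which would make the sum large. This requires checking that the quadratic factor $x^2-x+1$ is separable and coprime to $x$ (true as long as its discriminant $-3$ is nonzero and it does not share the root $0$), and that the exponent pattern forced by $\chi\psi^j$ does not conspire to make $g^{a}(g^2-g+1)^{j}$ a $k$-th power. Since $\psi$ has order exactly $4$ and $1\le j\le 3$, the factor $(x^2-x+1)^j$ carries a character exponent not divisible by $k$ unless $\chi$ cancels it precisely, and a short case analysis on the orders rules this out. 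Once this separability and non-degeneracy check is dispatched, the remaining steps are the bookkeeping of the M\"obius sieve and the elementary bound $\sum_{d\mid p-1}|\mu(d)|=2^{\omega(p-1)}$, which are routine.
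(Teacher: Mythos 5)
Your route is genuinely different from the paper's, and it can be made to work, but one step as written is wrong and must be repaired. For comparison: the paper never forms a mixed sum $\sum_g \chi(g)\psi^j(f(g))$ at all. It writes $\mathcal{G}(p)=\{g_0^k:\ 1\le k\le p-1,\ \gcd(k,p-1)=1\}$ for a fixed primitive root $g_0$, applies M\"obius inversion to the coprimality condition on the exponent $k$, and arrives at
\[
\sum_{g\in\mathcal{G}(p)}\psi^j\big(f(g)\big)=\sum_{d\mid p-1}\frac{\mu(d)}{d}\sum_{x\in\F_p^*}\psi^j\big(f(x^d)\big),
\qquad f(x)=x(x^2-x+1),
\]
so each inner sum is a \emph{single} character applied to a \emph{single} polynomial $f(x^d)$, which has at most $2d+1$ distinct roots and only $0$ as a possible repeated root; Lemma~\ref{lem:Weil} applies verbatim, giving $2d\sqrt p$, and the factor $1/d$ restores a bound of $2\sqrt p+1$ per squarefree divisor. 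Your Vinogradov-type expansion of the indicator $\mathbf{1}_{g\in\mathcal{G}(p)}$ instead produces, for each character $\chi$ of exact order $d\mid p-1$, a mixed sum $\sum_{g}\chi(g)\psi^j(f(g))$; your bookkeeping ($\varphi(d)$ characters of order $d$, weights $|\mu(d)|/\varphi(d)$, prefactor $\varphi(p-1)/(p-1)\le 1$) is correct and in fact yields the marginally sharper bound $\frac{\varphi(p-1)}{p-1}\,2^{\omega(p-1)}\cdot 2\sqrt p$. The trade-off is that your inner sums are not literally of the form covered by Lemma~\ref{lem:Weil}, so you owe an extra reduction that the paper's arrangement avoids.

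That reduction is exactly where your sketch fails. You propose to verify that the relevant polynomial is not a $k$-th power with $k=\ord(\chi\psi^j)$; this is the wrong criterion, and it collapses in a case that genuinely occurs in your expansion: for $d=4$, one of the characters of order $4$ is $\chi=\psi^{-j}$, for which $\chi\psi^j$ is trivial, $k=1$, and ``not a perfect $1$st power'' is vacuous. The correct reduction uses the cyclicity of the character group: choose a character $\eta$ whose order $k$ is the least common multiple of $\ord(\chi)$ and $\ord(\psi^j)$ (or simply a generator of the full character group), and write $\chi=\eta^u$, $\psi^j=\eta^v$ with $0\le u<k$, $0<v<k$. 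Then
\[
\chi(g)\,\psi^j\big(f(g)\big)=\eta\big(g^{u+v}(g^2-g+1)^v\big),
\]
and Lemma~\ref{lem:Weil} applies to $\eta$ and the polynomial $x^{u+v}(x^2-x+1)^v$, which has at most three distinct roots, giving $2\sqrt p$. Moreover, no case analysis of the kind you describe is needed: $\chi$ only influences the exponent of the factor $x$ and can never ``cancel'' the quadratic factor, whose exponent $v$ satisfies $0<v<k$ because $\eta^v=\psi^j$ is nontrivial; since the two roots of $x^2-x+1$ are distinct and nonzero (its discriminant is $-3\ne 0$ as $p>3$), their multiplicity $v$ is not divisible by $k$, so the polynomial is never a $k$-th power. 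Finally, the missing $g=0$ term costs nothing since the summand vanishes there. With this fix your argument is complete and delivers the claimed bound.
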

\begin{poc}
Fix $j\in \{1,2,3\}$ and set $\chi=\psi^j$. Let $f(x)=x(x^2-x+1)$. Let $g_0$ be a fixed primitive root of $\mathbb{F}_p$. Then $$\mathcal{G}(p)=\{g_0^k: 1\leq k \leq p-1, \gcd(k,p-1)=1\}.$$
By M\"obius inversion, we have
\begin{align}
\sum_{g \in \mathcal{G}(p)} \chi(f(g))
&=\sum_{\substack{1\leq k \leq p-1 \\\gcd(k, p-1)=1}} \chi\left(f\left(g_0^k\right)\right) 
=\sum_{k=1}^{p-1} \bigg(\sum_{d \mid \gcd(k,p-1)} \mu(d)\bigg) \chi(f(g_0^k)) \notag\\
&=\sum_{d \mid p-1} \mu(d) \sum_{k=1}^{\frac{p-1}{d}} \chi\left(f\left(g_0^{k d}\right)\right) =\sum_{d \mid p-1} \frac{\mu(d)}{d} \sum_{x \in \mathbb{F}_p^*} \chi\left(f\left(x^d\right)\right).
\label{eq:d}
\end{align}

Let $d$ be a divisor of $p-1$. Note that the number of distinct roots of $f(x^d)=x^d(x^{2d}-x^d+1)$ over the algebraic closure of $\F_p$ is at most $2d+1$. Since $p>3$, it is easy to verify that the only possible multiple root of $f(x^d)$ is $0$, and in particular $f(x^d)$ is not a constant multiple of a square of a polynomial in $\F_p[x]$. Thus, Weil's bound (Lemma~\ref{lem:Weil}) implies that
$$
\frac{1}{d}\bigg| \sum_{x \in \mathbb{F}_p^*} \chi\left(f\left(x^d\right)\right) \bigg|\leq \frac{2d\sqrt{p}+1}{d}\leq 2\sqrt{p}+1.
$$

Thus, equation~\eqref{eq:d} implies that
$$
\bigg|\sum_{g \in \mathcal{G}(p)} \chi(f(g))\bigg|\leq \sum_{d \mid p-1} \mu^2(d) \bigg|\frac{1}{d} \sum_{x \in \mathbb{F}_p^*} \chi\left(f\left(x^d\right)\right) \bigg|\leq 2^{\omega(p-1)} (2\sqrt{p}+1),
$$
since the number of square-free divisors of $p-1$ is precisely $2^{\omega(p-1)}$, as required.
\end{poc}

Combining inequality~\eqref{eq:lb} and the above claim finishes the proof of the proposition.
\end{proof}

\subsection{Completion of the proof}\label{subsec:finish}

Our goal is to prove Theorem~\ref{thm:main} for all primes $p \equiv 1 \pmod 8$. Recall that inequality~\eqref{eq:omega} holds for all sufficiently large primes $p$.
To obtain a fully explicit result for all primes,
we next derive explicit bounds in \textbf{Case 1} and \textbf{Case 2}, and verify the remaining primes computationally in \textbf{Case 3}.

\medskip

\textbf{Case 1: $p\geq 7 \times 10^8$.}\\
In this case, we apply the crude bound that
$$
2^{\omega(p-1)}\leq \frac{\tau(p-1)}{2}.
$$
To see this, we can partition divisors of $(p-1)$ according to the largest odd factor of each divisor. Note that those divisors with the largest odd factor $d$ include $d,2d,4d,8d$ and potentially more; however, among them, only $d,2d$ are square-free. Thus, by Propositions~\ref{prop:main} and~\ref{prop:charsum}, it suffices to show that
$$
\varphi(p-1)>\tau(p-1) \cdot (3\sqrt{p}+2)
$$
In view of Lemma~\ref{lem:explicit}, it suffices to verify the following inequality
$$
\frac{p-1}{e^{\gamma}\,\log\log (p-1) \;+\; \frac{3}{\log\log (p-1)}}>(3\sqrt{p}+2)\cdot (p-1)^{\frac{1.5379\log 2}{\log \log (p-1)}}.
$$  
This holds when $p\geq 7 \times 10^8$. Indeed, a direct differentiation shows that the ratio of the left-hand side to the right-hand side is increasing for $p\ge 7\times10^8$, and a numerical evaluation at $p=7\times10^8$ shows that this ratio exceeds $1$.
\medskip

\textbf{Case 2: $9.2 \times 10^7 \leq p\leq 7 \times 10^8$.}\\
In this case, we claim that $\omega(p-1)\leq 8$. Suppose, to the contrary, that $\omega(p-1)\ge9$. Since $p\equiv 1 \pmod 8$, we must have
$$
p\geq 8 \cdot 3 \cdot 5 \cdot 7 \cdot 11 \cdot 13 \cdot 17 \cdot 19 \cdot 23>8 \times 10^8,
$$
a contradiction. Then, $2^{\omega(p-1)}\le 2^8=256$.
Thus, by Lemma~\ref{lem:explicit} and Propositions~\ref{prop:main} and~\ref{prop:charsum}, it suffices to verify the inequality 
$$
\frac{p-1}{e^{\gamma}\,\log\log (p-1) \;+\; \frac{3}{\log\log (p-1)}}>256(6\sqrt{p}+3).
$$  
This holds when $p\geq 9.2 \times 10^7$.

\medskip

\textbf{Case 3: $p\leq 9.2 \times 10^7$.}\\
For each of the 1,331,027 primes $p\leq 9.2 \times 10^7$ with $p \equiv 1 \pmod 8$, we verified the hypothesis of Proposition~\ref{prop:main} by SageMath and we found that for each such prime $p$, there exists a primitive root $a\in \{2,3,\ldots, 339\}$ such that $a(a^2-a+1)$ is a nonzero $4$-th power. To achieve that, first we factorized $p-1$ to get the list of prime divisors of $p-1$. Note that, based on the analysis in Case 2, we have $\omega(p-1)\leq 8$. Then we enumerated all possible $a$ from $2$ to $p-1$, and stopped at the first desired $a$. To check that $a$ is a primitive root, it suffices to verify that $a^{\frac{p-1}{r}}\not \equiv 1 \pmod p$ for all the prime divisors $r$ of $p-1$ (there are at most $8$ such $r$). On the other hand, to check that $a(a^2-a+1)$ is a nonzero $4$-th power, it suffices to verify that $a^{(p-1)/4}(a^2-a+1)^{(p-1)/4}\equiv 1 \pmod p$. In total, the running time of our code \cite{Code} was less than $15$ minutes. 

\section*{Acknowledgments}
C.H. Yip thanks the Institute for Basic Science for its hospitality during his visit, where this project was initiated. 
S. Yoo was supported by the Institute for Basic Science (IBS-R029-C1). 

\bibliographystyle{abbrv}
\bibliography{references}

\end{document}